\begin{document}

\author{Hrant Hakobyan and Dragomir \v Sari\' c}
\thanks{H.~H. was partially supported by Kansas NSF EPSCoR Grant NSF68311}
\thanks{D.~S. was partially supported by National Science Foundation
grant DMS 1102440 and by the Simons Foundation Collaboration Grant for
Mathematicians 2011.}

\address{HH:Department of Mathematics, Kansas State University, Manhattan, KS, 66506}

\email{hakobyan@math.ksu.edu}

\address{DS:Department of Mathematics, Queens College of CUNY,
65-30 Kissena Blvd., Flushing, NY 11367} \email{Dragomir.Saric@qc.cuny.edu}

\address{DS:Mathematics PhD. Program, The CUNY Graduate Center, 365 Fifth Avenue, New York, NY 10016-4309}

\theoremstyle{definition}

 \newtheorem{definition}{Definition}[section]
 \newtheorem{remark}[definition]{Remark}
 \newtheorem{example}[definition]{Example}

\newtheorem*{notation}{Notation}

\theoremstyle{plain}

 \newtheorem{proposition}[definition]{Proposition}
 \newtheorem{theorem}[definition]{Theorem}
 \newtheorem{corollary}[definition]{Corollary}
 \newtheorem{lemma}[definition]{Lemma}

\newcommand{\eps}{\varepsilon}
\newcommand{\G}{\Gamma}
\newcommand{\g}{\gamma}
\newcommand{\D}{\Delta}
\renewcommand{\d}{\delta}
\newcommand{\dist}{\mathrm{dist}}
\newcommand{\m}{\mathrm{mod}}

\title{Vertical limits of graph domains}

\subjclass{}

\keywords{}
\date{\today}

\maketitle

\begin{abstract}
We consider the limiting behavior of Teichm\"uller geodesics in the
universal Teichm\"uller space $T(\mathbb{H})$. Our main result states that
the limits of the Teichm\"uller geodesics in the Thurston's boundary of
$T(\mathbb{H})$ may depend on both vertical and horizontal foliation of the
corresponding holomorphic quadratic differential.
\end{abstract}

\section{Introduction}

By Uniformization Theorem, a simply connected domain $D$ in the complex plane
$\mathbb{C}$ is conformally equivalent to the hyperbolic plane $\mathbb{H}$.
The set of prime ends of $D$ is homeomorphic to the unit circle
$\mathbb{S}^1$-the ideal boundary of $\mathbb{H}$ (cf. \cite{Pomm}). Unless
stated otherwise, we implicitly assume this identification.

The map $T_{\eps}$ of $D$ that is obtained by multiplying the distances in
the vertical direction by $\eps >0$ is called the {\it Teichm\"uller map}.
Thus, for $\eps>0$ we have
$$T_{\eps}(x,y)=(x,\eps y).$$

The image of $D$ under $T_{\eps}$ is a new simply connected domain $D_{\eps}$
in $\mathbb{C}$. The Teichm\"uller map extends by continuity to a marking
homeomorphism between the space of prime ends of $D$ and the space of prime
ends of $D_{\eps}$. Note that both spaces of prime ends are implicitly
identified with the unit circle $\mathbb{S}^1$ (cf. \cite{Pomm}). We prove
(cf. \S 5)

%. connecting $(a,b)\subset \mathbb{S}^1$ and $(c,d)\subset \mathbb{S}^1$ (cf.
%\cite{LV}).

%\vskip .2 cm

\begin{theorem}\label{thm:arbitrary}
Let $D$ be a simply connected domain under the graph of a real-valued
function. Assume that $\G$ is the family of curves in $D$ connecting
$(a,b)\subset \mathbb{S}^1$ and $(c,d)\subset \mathbb{S}^1$. Then
\begin{equation}\label{equality:main}
\lim_{\eps\to 0}\eps\cdot \m(\G^{\eps})=\m(\G_{v}),
\end{equation}
where $\G^{\eps}=T_{\eps}(\G)$ is the image of $\G$ under the Teichmuller map
and $\G_{v}$  is the family, possibly empty, of {{vertical line segments}} in
$\G$.
\end{theorem}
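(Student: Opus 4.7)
The plan is to prove the equality in (\ref{equality:main}) by showing the two inequalities $\m(\Gamma_v) \leq \liminf_{\eps \to 0} \eps \cdot \m(\Gamma^\eps)$ and $\limsup_{\eps \to 0} \eps \cdot \m(\Gamma^\eps) \leq \m(\Gamma_v)$ separately. The lower bound is the easier direction: given a metric $\rho$ admissible for $\Gamma^\eps$ on $D_\eps$, I would pull it back by defining $\tilde\rho(x,y) := \eps\,\rho(x,\eps y)$ on $D$. For every vertical segment $\gamma_v \in \Gamma_v$, the image $T_\eps(\gamma_v)$ is vertical in $D_\eps$ with arc-length element $\eps|dy|$, so a direct change of variable yields $\int_{\gamma_v} \tilde\rho\,ds = \int_{T_\eps(\gamma_v)} \rho\,ds \geq 1$, showing $\tilde\rho$ is admissible for $\Gamma_v$. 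The same change of variable gives $\int_D \tilde\rho^2\,dA = \eps \int_{D_\eps} \rho^2\,dA$; taking the infimum over $\rho$ yields $\m(\Gamma_v) \leq \eps \cdot \m(\Gamma^\eps)$ for every $\eps > 0$.

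For the reverse inequality I would exploit the structure of $\Gamma_v$ as a family of disjoint parallel vertical segments. Let $V \subset \mathbb R$ be the projection of $\Gamma_v$ onto the $x$-axis and let $h(x)$ be the length of the vertical segment of $\Gamma_v$ at abscissa $x \in V$; by Cauchy--Schwarz one has $\m(\Gamma_v) = \int_V dx/h(x)$, with extremal metric $\rho_v^*(x,y) = \mathbf{1}_{\Gamma_v}(x,y)/h(x)$. I would then propose the candidate test metric
$$\rho^\eps(x,y) := \frac{1}{\eps\,h(x)}\,\mathbf{1}_{T_\eps(\Gamma_v)}(x,y) \qquad \text{on } D_\eps,$$
for which a direct computation gives $\eps \int_{D_\eps} (\rho^\eps)^2\,dA = \int_V dx/h(x) = \m(\Gamma_v)$. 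The admissibility of $\rho^\eps$ on vertical images $T_\eps(\gamma_v)$ is automatic; the difficulty lies in verifying admissibility on all of $\Gamma^\eps$.

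This verification is the main obstacle. For a generic $\gamma^\eps \in \Gamma^\eps$, writing $\gamma = T_\eps^{-1}(\gamma^\eps) = (x(t),y(t)) \in \Gamma$ and using $\sqrt{x'(t)^2 + \eps^2 y'(t)^2} \geq \max(|x'(t)|,\eps|y'(t)|)$, one obtains
$$\int_{\gamma^\eps} \rho^\eps\,ds \;\geq\; \max\!\left(\int \frac{\mathbf{1}_V(x)\,|y'|}{h(x)}\,dt,\; \frac{1}{\eps}\int \frac{\mathbf{1}_V(x)\,|x'|}{h(x)}\,dt\right).$$
The task is to argue that for every $\gamma \in \Gamma$, either the first quantity is at least $1 - o(1)$ (the curve traverses sufficient $h$-weighted vertical height in $V$), or the second is bounded away from zero (the curve has nontrivial horizontal travel in $V$), in which case the $1/\eps$ factor forces $\int \rho^\eps\,ds \geq 1$ for $\eps$ small enough. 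The graph-domain structure of $D$---each vertical line meets $D$ in a single interval, so the segments of $\Gamma_v$ are pairwise disjoint chords of $D$---together with the prime-end positioning of the arcs $(a,b)$ and $(c,d)$ should force one of these alternatives to hold for each $\gamma$, by a Jordan separation argument possibly coupled with a small $\delta$-thickening of the support of $\rho^\eps$. Once admissibility is established, taking the infimum and sending $\eps \to 0$ (and $\delta \to 0$ if thickening is used) yields $\limsup_{\eps\to 0} \eps \cdot \m(\Gamma^\eps) \leq \m(\Gamma_v)$, completing the proof.
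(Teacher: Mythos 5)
Your first inequality is fine and is essentially the paper's: pulling back an admissible metric under $T_{\eps}$ (equivalently, noting $\m(\G_v)=\eps\cdot\m(\G_v^{\eps})\le\eps\cdot\m(\G^{\eps})$ by the explicit formula $\m(\G_v)=\int_V dx/h(x)$ and monotonicity) gives $\m(\G_v)\le\eps\cdot\m(\G^{\eps})$ for every $\eps$. The gap is in the reverse inequality, and it is genuine: the candidate metric $\rho^{\eps}=\frac{1}{\eps h(x)}\mathbf{1}_{T_{\eps}(\G_v)}$ is simply not admissible for $\G^{\eps}$, and the dichotomy you hope for (vertical contribution $\ge 1-o(1)$ or horizontal contribution bounded below) is false. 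Take $f=1$ on $(0,1]$ and $f=2$ on $(1,2)$, with $(a,b)$ the bottom edge and $(c,d)$ the top edge over $(1,2)$; then $V=(1,2)$, $h\equiv 2$, and $\m(\G_v)=1/2$. A curve in $D_{\eps}$ that starts at $(1/2,0)$, rises to height just below $\eps$, slides right across $x=1$ with horizontal travel $\delta=\eps^{2}$ inside $V$, and then goes straight up to height $2\eps$, picks up $\rho^{\eps}$-length about $\frac{1}{2\eps}\bigl(\eps+\eps^{2}\bigr)\approx\frac12<1$: it enters the support laterally at positive height, so it only traverses half the $h$-weighted vertical extent, while its horizontal travel inside $V$ is $o(\eps)$ and contributes nothing in the limit. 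No constant rescaling short of a factor $2$ fixes this, so the infimum you would obtain is not $\m(\G_v)$.

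The paper avoids this by never trying to exhibit a single near-extremal metric on $D_{\eps}$. Instead it splits $\G$ by horizontal variation: for $\G_{\ge\eta}$ (curves whose projection to the $x$-axis has length $\ge\eta$) the constant metric $\chi_{D^{\eps}}/\eta$ together with $A(D^{\eps})=\eps A(D)$ gives $\eps\cdot\m(\G^{\eps}_{\ge\eta})\le\eps^{2}A(D)/\eta^{2}\to 0$; for $\G_{<\eta}$ it uses $\m(\G^{\eps}_{<\eta})\le\eps^{-1}\m(\G_{<\eta})$ (quasiconformality of $T_{\eps}$), reducing everything to showing $\lim_{\eta\to 0^{+}}\m(\G_{<\eta})\le\m(\G_v)$. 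That last inequality is where the graph-domain geometry actually enters: one partitions $[p,q]$, observes that each curve in $\G_{<\eta}$ starting over $[x_i,x_{i+1}]$ overflows a rectangle of width $x_{i+1}-x_i+2\eta$ and height $\min_{[x_i-\eta,x_{i+1}+\eta]}f$, and recognizes the resulting bound as a Riemann sum for $\int_p^q dx/f(x)$; lower semicontinuous $f$ is then handled by monotone approximation with continuous functions. Your ``$\delta$-thickening'' remark points in this direction, but as written the admissibility step --- which you yourself flag as the main obstacle --- is both unproven and, for the specific metric proposed, unprovable. Note also that the paper records a counterexample showing $\lim_{\eta\to0^{+}}\m(\G_{<\eta})\le\m(\G_v)$ fails for general subfamilies, so any correct argument must use that $\G$ is the \emph{full} family of connecting curves; your sketch never isolates where that hypothesis is used.
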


In the theorem above $\m(\G)$ denotes the conformal modulus of a curve family
$\G$, see Section \ref{Section:modulus+Liouville} for the definition of the
modulus.

Next, we interpret Theorem \ref{thm:arbitrary} in terms of the asymptotic
behavior of Teichm\"uller geodesics corresponding to a particular type of
quadratic differentials in the universal Teichm\"uller space.

The universal Teichm\"uller space $T(\mathbb{H})$ consists of all
quasisymmetric maps of $\mathbb{S}^1$ which fix $-i,1,i\in \mathbb{S}^1$. The
Teichm\"uller map under the identification of the prime ends of $D$ and
$D^{\epsilon}$ with $\mathbb{S}^1$ induces a quasisymmetric map
$h_{\epsilon}:\mathbb{S}^1\to \mathbb{S}^1$. Thus we obtain a path in the
universal Teichm\"uller space $T(\mathbb{H})$ parameterized by $\eps
>0$ which corresponds to a Teichm\"uller geodesic. The path $\eps\mapsto
h_{\eps}$ is unbounded in the Teichmuller metric (see Section 2 for the
definition) as $\eps\to 0$.

We consider the question of finding the limiting behavior of the
Teichm\"uller geodesic in the universal Teichm\"uller space $T(\mathbb{H})$.
Masur \cite{Mas} described the limiting behavior of Teichm\"uller geodesics
for the case of compact surfaces. He showed that if the vertical foliation
 of the corresponding quadratic differential $\varphi$ is
uniquely ergodic then the limit of the Teichm\"uller geodesic in the
Thurston's boundary is the projective class of the measured lamination which
is equivalent to the vertical foliation of $\varphi$ (cf. \cite{Mas}). When
the vertical foliation consists of finitely many cylinders, then the limit is
the projective lamination with support consisting of closed geodesics
homotopic to the cylinders of the vertical foliation but the weights are all
equal while the cylinder heights might be different (cf. \cite{Mas}).
Moreover, there are examples of Teichm\"uller geodesics which do not have
unique limiting points on Thurston's boundary (cf. \cite{Len}).

In the case of closed surfaces, the limiting behavior of Teichm\"uller
geodesics is investigated using the lengths of simple closed geodesics. There
are no closed geodesics in the hyperbolic plane $\mathbb{H}$. Thurston's
boundary to the universal Teichm\"uller space $T(\mathbb{H})$ is identified
with the space $PML_{bdd}(\mathbb{H})$ of projective bounded measured
laminations on $\mathbb{H}$ (cf. \cite{Sa2}, \cite{Sa4}). The bordification
of $T(\mathbb{H})$ is done using geodesic currents, i.e. the space of
positive Borel measures on the space of geodesics of $\mathbb{H}$ (for
definition see \S 2 and \cite{Bon1}). Therefore the study of limit points
involves the study of the limits of geodesic currents. Although we consider
limits of Teichm\"uller geodesics as in the case of closed surfaces, the
ideas and arguments used are somewhat more analytical in nature and are
disjoint from the prior work on closed surfaces (cf. \cite{Mas}, \cite{Len}).
We prove

\begin{theorem}\label{thm:teichmuller}
{\it Let $\varphi :\mathbb{H}\to\mathbb{C}$ be an integrable holomorphic
quadratic differential on the hyperbolic plane $\mathbb{H}$ without zeros or
poles in $\mathbb{H}$. Assume that the image in $\mathbb{C}$ of $\mathbb{H}$
in the natural parameter of $\varphi$ is a domain $D$ bounded by the graphs
of two functions $f(x)$ and $g(x)$ {defined on an interval $I$ of the real
line}. Denote by $h_{\eps}$, for $\eps
>0$, the Teichm\"uller geodesic which scales the vertical direction of
$\varphi$ by $\eps >0$. Then the limit of the Teichm\"uller geodesic
$h_{\eps}$ as $\eps\to 0$ is equal to the projective class of the measured
lamination whose support is homotopic to the vertical foliation of $\varphi$
and whose transverse measure is given by
$$
\int_I \frac{1}{|f(x)-g(x)|}dx
$$
where $I$ is a horizontal arc transverse to the vertical foliation and $dx$
is the linear measure on the horizontal {interval $I$}. }
\end{theorem}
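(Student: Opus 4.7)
The plan is to combine Theorem~\ref{thm:arbitrary} with the description of Thurston's boundary of $T(\mathbb{H})$ in terms of geodesic currents from \cite{Sa2,Sa4}. Recall that $T(\mathbb{H})$ embeds in the cone of geodesic currents on $\mathbb{H}$ by sending each $h$ to its Liouville current $L_h$, and a sequence $h_n$ converges to a projective bounded measured lamination $[\mu]$ if and only if there exist rescaling factors $c_n \to 0$ with $c_n L_{h_n}$ converging weakly to $\mu$. Since weak convergence of geodesic currents is detected on "boxes" $B = (a,b)\times(c,d)$ of pairs of disjoint boundary arcs, it suffices to compute $\lim_{\eps\to 0}\eps\cdot L_{h_\eps}(B)$ for each such $B$ and identify the resulting set function with the transverse measure in the statement.

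The first step is to express $L_{h_\eps}(B)$ in terms of a conformal modulus. By conformal invariance, the Liouville measure of $B$ is a monotone function of the modulus $\m(\G^\eps)$ of the curve family $\G^\eps = T_\eps(\G)$ in $D_\eps$ joining the arcs corresponding to $(a,b)$ and $(c,d)$. In the large-modulus regime, which we enter as $\eps\to 0$, the classical Teichm\"uller modulus function gives an asymptotic $L_{h_\eps}(B) = C\cdot\m(\G^\eps)+O(1)$ for a universal constant $C>0$. Applying Theorem~\ref{thm:arbitrary} then yields
$$
\lim_{\eps\to 0}\eps\cdot L_{h_\eps}(B) \;=\; C\cdot\m(\G_v),
$$
where $\G_v\subset\G$ is the subfamily of vertical line segments connecting the two arcs.

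It remains to identify $\m(\G_v)$ with the asserted transverse measure. The segments in $\G_v$ are parameterized by those $x\in J\subset I$ for which the vertical fiber over $x$ has its two prime-end endpoints lying in the respective arcs; the extremal admissible density $\rho(x,y) = (f(x)-g(x))^{-1}\chi_J(x)\chi_{(g(x),f(x))}(y)$, together with a Cauchy--Schwarz lower bound, gives
$$
\m(\G_v) \;=\; \int_J \frac{dx}{f(x)-g(x)}.
$$
On the other hand, each vertical leaf $\{x=x_0\}$ of $\varphi$, viewed in the natural parameter, has two prime-end endpoints which correspond to two points of $\mathbb{S}^1$, i.e.\ the ideal endpoints of a geodesic of $\mathbb{H}$; as $x_0$ varies, these geodesics sweep out a measured lamination whose support is homotopic to the vertical foliation of $\varphi$. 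The geodesics intersecting $B$ are precisely those with $x_0\in J$, so the candidate transverse measure $dx/|f(x)-g(x)|$ assigns mass $\int_J dx/(f(x)-g(x))$ to $B$, matching $\m(\G_v)$ up to the universal constant $C$, which disappears upon passing to projective classes.

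The main obstacle is the first step: one needs the asymptotic $L_{h_\eps}(B) = C\cdot\m(\G^\eps)+O(1)$ with the $O(1)$ error controlled uniformly in the box $B$, so that the pointwise limits on boxes genuinely determine the weak limit of the rescaled Liouville currents. A secondary point is to check that the candidate measure $dx/|f(x)-g(x)|$ defines a bounded measured lamination in the sense of \cite{Sa2,Sa4}, which reduces to a local integrability and boundedness estimate for $1/(f-g)$, justified by the integrability hypothesis on $\varphi$.
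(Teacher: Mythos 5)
Your proposal follows the paper's own route exactly: the paper derives Theorem \ref{thm:teichmuller} as a direct consequence of Theorem \ref{thm:arbitrary} together with Lemma \ref{lem:mod_liouville_measure} (which is precisely your asymptotic $\mathcal{L}([a,b]\times[c,d])=\pi\,\m(\G^{\eps})+O(1)$, i.e.\ $C=\pi$) and Lemma \ref{lemma:vertical} for the identification $\m(\G_v)=\int_J dx/(f(x)-g(x))$. The ``main obstacle'' you flag is not actually an obstacle: the $O(1)$ error is multiplied by $\eps\to 0$ and vanishes box by box, which is all that weak* convergence of the rescaled Liouville currents requires (the paper explicitly restricts the convergence to the weak* topology and leaves the uniform weak* version as an open question).
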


We remark that the limiting projective measured lamination, although unique,
cannot be described solely in terms of the vertical foliation of the
holomorphic quadratic differential $\varphi$. This is a new phenomenon which
does not appear in the Teichm\"uller spaces of compact surfaces. To
illustrate this phenomenon, assume that $D$ is the domain under the graph of
a step function. Then the transverse measure is a multiple of the linear
measure by the reciprocal of the heights of the steps.

One consequence of this phenomenon is that if we consider $\varphi$ on
$\mathbb{H}$ and a corresponding holomorphic differential $\varphi_1$ on
$f(\mathbb{H})$, where $f$ is a marking map defining a point in
$T(\mathbb{H})$, the limits of the corresponding Teichm\"uller geodesics in
the Thurston's boundary are different even though $\varphi$ and $\varphi_1$
have the same vertical foliations. In the case of closed surfaces the limits
are the same.

We point out that finite area of $D$ can be replaced by ``locally finite
area'' condition-for each finite horizontal arc the total area of the domain
formed by the vertical leaves intersecting it is finite. The convergence in
Theorem 2 is in the weak* topology on the geodesic currents. It is an
interesting question to determine whether the above convergence holds for the
uniform weak* topology from \cite{Sa3}. Moreover, it would be interesting to
extend Theorem 2 to the case of arbitrary finite area Jordan domain or even
to arbitrary integrable holomorphic quadratic differentials.

\section{Thurston's boundary of the universal Teichm\"uller space}

Let $\mathbb{H}$ be the hyperbolic plane. The ideal boundary of $\mathbb{H}$
is homeomorphic to the unit circle $\mathbb{S}^1$ in the complex plane. A
homeomorphism $h:\mathbb{S}^1\to \mathbb{S}^1$ is said to be {\it
quasisymmetric} if there exists $M\geq 1$ such that
$$
\frac{1}{M}\leq\frac{|h(I)|}{|h(J)|}\leq M
$$
for all circular arcs $I,J$ with a common boundary point and disjoint
interiors such that $|I|=|J|$, where $|I|$ is the length of $I$. A
homeomorphism is quasisymmetric if and only if it extends to a quasiconformal
map of the unit disk. Since in this note we will not use quasiconformal
mappings we refer to the classical lecture notes of Ahlfors
\cite{Ahlfors:QClectures} for background on planar quasiconformal mappings.

\begin{definition}
The universal Teichm\"uller space $T(\mathbb{H})$ consists of all
quasisymmetric maps $h:\mathbb{S}^1\to \mathbb{S}^1$ that fix $-i,1,i\in
\mathbb{S}^1$.
\end{definition}

If $g:\mathbb{D}\to\mathbb{D}$ is a quasiconformal map, denote by $K(g)$ its
quasiconformal constant. The Teichm\"uller metric on $T(\mathbb{H})$ is given
by ${d(h_1,h_2)=\inf_g \log K(g)}$, where $g$ runs over all quasiconformal
extensions of the quasisymmetric map $h_1\circ h_2^{-1}$. The Teichm\"uller
topology is induced by the Teichm\"uller metric.

Thurston \cite{Bon1},\cite{FLP},\cite{Thurston} introduced a boundary to the
Teichm\"uller space of a closed hyperbolic surface as follows. First, the
Teichm\"uller space $T(S)$ of a closed surface $S$ embeds into
$\mathbb{R}^{\mathcal{S}}$, where $\mathcal{S}$ is the set of all simple
closed curves of $S$. The embedding
$T(S)\hookrightarrow\mathbb{R}^{\mathcal{S}}$ is defined by assigning to each
$\alpha\in\mathcal{S}$ the length of its geodesic representative for the
marked hyperbolic metric on the surface $S$ defining the point of $T(S)$. The
Teichm\"uller space remains embedded after projectivization
$T(S)\hookrightarrow \mathbb{R}^{\mathcal{S}}\hookrightarrow
P\mathbb{R}^{\mathcal{S}}$ and Thurston's boundary consists of the limit
points of the image of $T(S)$. It turns out that the Thurston's boundary is
identified with the space of projective measured laminations on $S$.

Bonahon \cite{Bon1} used a different approach to obtain Thurston's boundary
by embedding $T(S)$ into the space of geodesic currents on $S$. A {\it
geodesic current} on $S$ is a positive Borel measure on the space of
geodesics $(\mathbb{S}^1\times \mathbb{S}^1\setminus diag)/\mathbb{Z}_2$ of
the universal covering $\mathbb{H}$ of $S$ that is invariant under the action
of the covering group $\pi_1(S)$. Each point in the Teichm\"uller space
$T(S)$ is a (marked) hyperbolic metric which defines a unique (up to positive
multiple) {positive measure of full support, called the {\it Liouville
measure}, on the space of geodesics of the universal covering invariant under
the action of the covering group. Since the marking maps conjugate covering
groups, the pull backs of the Liouville measures under the marking maps give
geodesics currents on the base surface $S$. Then the closure of the
projectivization of the embedding of $T(S)$ in the space of the geodesic
currents of $S$ gives Thurston's boundary \cite{Bon1}.

The approach to the Thurston's boundary using geodesic currents is used in
\cite{Sa2}, \cite{Sa3} to introduce Thurston's boundary to the Teichm\"uller
space of arbitrary hyperbolic Riemann surface including the universal
Teichm\"uller space $T(\mathbb{H})$ because arbitrary Riemann surfaces might
not have enough non-trivial closed curves (e.g. the hyperbolic plane
$\mathbb{H}$ has no non-trivial closed curves). The space of geodesics of the
hyperbolic plane $\mathbb{H}$ is identified with $\mathbb{S}^1\times
\mathbb{S}^1\setminus diag$ by assigning to each geodesic the pair of its
endpoints.
 The {\it Liouville measure} $\mathcal{L}$ on the space of geodesic of
$\mathbb{H}$ is given by
$$
 \mathcal{L}(A)=\int_A \frac{d\alpha d\beta}{|e^{i\alpha}-e^{i\beta}|^2}
$$
for any Borel set $A\subset \mathbb{S}^1\times \mathbb{S}^1$. If
$A=[a,b]\times [c,d]$ then
$$
\mathcal{L}([a,b]\times [c,d])=\log\frac{(a-c)(b-d)}{(a-d)(b-c)}.
$$

To each $h\in T(\mathbb{H})$, we assign the pull-back $h^{*}(\mathcal{L})$ of
the Liouville measure by the quasisymmetric map $h:\mathbb{S}^1\to
\mathbb{S}^1$. This assignment is a homeomorphism of $T(\mathbb{H})$ onto its
image in the space of bounded geodesic currents; a geodesic current $\alpha$
is {\it bounded} if
$$
\sup_{[a,b]\times [c,d]}\alpha ([a,b]\times [c,d])<\infty
$$
where the supremum is over all $[a,b]\times [c,d]$ with
$\frac{(a-c)(b-d)}{(a-d)(b-c)}=2$. The space of bounded geodesic currents is
endowed with the family of H\"older norms parametrized with the H\"older
exponents $0<\nu\leq 1$ (cf. \cite{Sa2}). The homeomorphism of
$T(\mathbb{H})$ into the space of bounded geodesic currents is differentiable
with a bounded derivative given by a H\"older distribution (cf. \cite{Sa4})
and, in fact, Otal \cite{Ot} proved that it is real-analytic. The map from
$T(\mathbb{H})$ to the projective bounded geodesic currents remains a
homeomorphism and the boundary points  of the image of $T(\mathbb{H})$ are
all projective bounded measured laminations (cf. \cite{Sa2}). Thus Thurston's
boundary of $T(\mathbb{H})$ is the space $PML_{bdd}(\mathbb{H})$ of all
projective bounded measured laminations on $\mathbb{H}$ (and an analogous
statement holds for any hyperbolic Riemann surface). Alternatively, the space
of geodesic currents can be endowed with the uniform weak* topology and
Thurston's boundary for $T(\mathbb{H})$ is again $PML_{bdd}(\mathbb{H})$ (cf.
\cite{Sa3}).

\section{The limits of the moduli of families of curves and \\ the Liouville
measure}\label{Section:modulus+Liouville}

{Let $R$ be a simply connected region in $\mathbb{C}$ other than the complex
plane.} Let $f:R\to\mathbb{H}$ be the Riemann mapping, where $\mathbb{H}$ is
the unit disk model of the hyperbolic plane. Then the set of prime ends of
$R$ in the sense of Caratheodory is in a one to one correspondence with the
points of the unit circle $\mathbb{S}^1$ (cf. \cite{Pomm}). When we consider
a simply connected domain we will always implicitly assume the correspondence
of the prime ends with the points of the unit circle $\mathbb{S}^1$ under the
Riemann mapping.

Our goal is to relate the Liouville measure associated to two closed disjoint
arcs of $\mathbb{S}^1$ with the modulus of the family of curves (defined
below) in $\mathbb{H}$ connecting the two closed arcs. The correspondence
between the prime ends of a simply connected domain $R$ and $\mathbb{S}^1$
directly translates to $R$ the conclusions that we obtain for $\mathbb{S}^1$.

\subsection{Conformal modulus and its properties.}
Next we define the conformal modulus of a family of curves in $\mathbb{C}$
which is the main tool in this note. Suppose $\G$ is a family of locally
rectifiable curves in $\mathbb{C}$. A non-negative Borel measurable function
$\rho:\mathbb{C}\to[0,\infty]$ is called a $\G$ - \textit{addmissible metric}
 if for every $\gamma\in\Gamma$ we have
$$
l_{\rho}(\gamma )=\int_{\gamma}\rho (z)|dz|\geq 1.
$$
The quantity  $l_{\rho}(\gamma )$ is often called the $\rho$-length of $\g$.
The \textit{conformal modulus} $\m(\Gamma)$ of $\Gamma$ is defined by
$$
\m(\Gamma)=\inf_{\rho}\int_{\mathbb{D}}\rho(z)^2dxdy
$$
where the infimum is over all $\G$-admissible metrics $\rho$.

In what follows we will need some basic properties of the modulus. We refer
 to \cite{LV,Vaisala:lectures} for the proofs of the properties
below and for further background on conformal modulus.}

We will say that $\G_1$ \emph{overflows} $\G_2$ and will write $\G_1>\G_2$ if
every curve $\g_1\in \G_1$ contains some curve $\g_2\in \G_2$.

\begin{lemma} Let $\G_1,\G_2,\ldots$ be curve families in $\mathbb{C}$. Then
\begin{itemize}
  \item[1.] (\textbf{Monotonicity}) If $\G_1\subset\G_2$ then $\m(\G_1)\leq
      \m(\G_2)$.
  \item[2.] (\textbf{Subadditivity}) $\m(\bigcup_{i=1}^{\infty} \G_i) \leq
      \sum_{i=1}^{\infty}\m(\G_i).$
  \item[3.] (\textbf{Overflowing}) If $\G_1<\G_2$ then $\m (\G_1) \geq \m
      (\G_2)$.
\end{itemize}
\end{lemma}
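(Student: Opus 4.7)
The plan is to verify each of the three properties directly from the definitional infimum over $\G$-admissible metrics. None of the three requires a non-trivial construction, and the only mildly technical point is the measurability step in the subadditivity argument.

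For monotonicity, I would observe that if $\G_1\subset\G_2$, then any $\G_2$-admissible metric $\rho$ automatically satisfies $l_\rho(\g)\geq 1$ for every $\g\in\G_1$. Hence the class of $\G_1$-admissible metrics contains the class of $\G_2$-admissible ones, and taking the infimum of $\int\rho^2$ over a larger class can only decrease it: $\m(\G_1)\leq\m(\G_2)$.

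For subadditivity, given $\eps>0$, for each $i$ I pick a $\G_i$-admissible metric $\rho_i$ with $\int\rho_i^2\leq\m(\G_i)+\eps/2^i$, and form the $\ell^2$-combination
\[
\rho(z):=\Bigl(\sum_{i=1}^{\infty}\rho_i(z)^2\Bigr)^{1/2}.
\]
This is Borel measurable as the pointwise supremum of its (measurable) partial sums. Any curve $\g\in\bigcup_i\G_i$ lies in some $\G_j$, and the pointwise bound $\rho\geq\rho_j$ gives $l_\rho(\g)\geq l_{\rho_j}(\g)\geq 1$, so $\rho$ is admissible for the union. Monotone convergence then yields $\int\rho^2=\sum_i\int\rho_i^2\leq\sum_i\m(\G_i)+\eps$, and letting $\eps\to 0$ finishes the argument.

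For the overflowing property, suppose $\G_1<\G_2$, so each $\g_2\in\G_2$ contains a subcurve $\g_1\in\G_1$. For any $\G_1$-admissible metric $\rho$ and any $\g_2\in\G_2$, positivity of $\rho$ and monotonicity of the length integral along subcurves give $l_\rho(\g_2)\geq l_\rho(\g_1)\geq 1$, so $\rho$ is also $\G_2$-admissible. Infimizing $\int\rho^2$ over this common pool of admissible metrics yields $\m(\G_2)\leq\m(\G_1)$. I do not foresee a genuine obstacle here: these are standard manipulations in the theory of conformal modulus, and each conclusion follows from exhibiting a containment between the two relevant classes of admissible densities.
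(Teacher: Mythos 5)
Your proof is correct: all three properties follow, as you show, from exhibiting a containment between the relevant classes of admissible metrics, with the $\ell^2$-combination $\rho=(\sum_i\rho_i^2)^{1/2}$ and monotone convergence handling subadditivity. The paper itself gives no proof, deferring to Lehto--Virtanen and V\"ais\"al\"a, and your arguments are precisely the standard ones found there, so there is nothing to reconcile.
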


Another very important property of the conformal modulus is its invariance
under conformal mappings of the plane.

\begin{lemma}[Conformal invariance of modulus]
Suppose $\G$ is a family of curves in a domain $D\subset\mathbb{C}$ and $f$
is a conformal mapping of $D$ onto $D'$. Let $f(\G)\subset D'$ be the image
of the family $\G$, i.e. $f(\G)=\{f(\g): \g\in\G\}$. Then
  $$\m(f(\G)) = \m(\G).$$
\end{lemma}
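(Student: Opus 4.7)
The plan is to prove both inequalities $\m(\G) \leq \m(f(\G))$ and $\m(f(\G)) \leq \m(\G)$ by transporting admissible metrics between $D$ and $D'$ via the conformal map $f$ and its inverse. The conceptual point is that for a conformal (holomorphic and injective) map the linear distortion factor along a curve is $|f'(z)|$, while the Jacobian of the underlying real map is exactly $|f'(z)|^2$. This matching---first power in the line integral defining $\rho$-length versus second power in the area integral defining the modulus---is what makes the modulus conformally invariant.

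For the inequality $\m(\G) \leq \m(f(\G))$, I would take an arbitrary $f(\G)$-admissible metric $\rho'$ on $D'$ and define the pulled-back metric
\[
\rho(z) := \rho'(f(z))\,|f'(z)|
\]
for $z\in D$, extended by zero elsewhere in $\mathbb{C}$. For any locally rectifiable $\gamma \in \G$, the substitution $|dw|=|f'(z)|\,|dz|$ along $f(\gamma)$ yields
\[
\int_\gamma \rho(z)\,|dz| \;=\; \int_\gamma \rho'(f(z))\,|f'(z)|\,|dz| \;=\; \int_{f(\gamma)} \rho'(w)\,|dw| \;\geq\; 1,
\]
so $\rho$ is $\G$-admissible. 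Then the real change-of-variables theorem, with Jacobian $|f'(z)|^2$, gives
\[
\int_D \rho(z)^2\,dx\,dy \;=\; \int_D \rho'(f(z))^2\,|f'(z)|^2\,dx\,dy \;=\; \int_{D'} \rho'(w)^2\,du\,dv.
\]
Taking the infimum over $\rho'$ on the right-hand side, while the left-hand side bounds $\m(\G)$ from above by admissibility of $\rho$, produces $\m(\G) \leq \m(f(\G))$.

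The reverse inequality $\m(f(\G)) \leq \m(\G)$ follows from the identical argument applied to the conformal inverse $f^{-1}: D' \to D$ acting on the family $f(\G)$, whose image under $f^{-1}$ is precisely $\G$. Combining the two inequalities gives the desired equality. The only subtlety I anticipate is the bookkeeping around locally rectifiable curves and the justification of the change of variables when $\rho'$ is merely non-negative Borel measurable; this is handled in the standard way by approximating $\rho'$ by simple functions and invoking monotone convergence, while the admissibility inequality holds pointwise in $\gamma$. Rectifiability transfers cleanly between $\gamma$ and $f(\gamma)$ because conformal maps are bi-Lipschitz on compact subsets of $D$, so no essential issue arises.
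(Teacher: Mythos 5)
Your proof is correct and is the standard one: the paper does not actually prove this lemma but defers to the cited references (Lehto--Virtanen and V\"ais\"al\"a), and your argument---pulling back an admissible metric via $\rho(z)=\rho'(f(z))\,|f'(z)|$, using that the line integral picks up one factor of $|f'|$ while the area integral picks up the Jacobian $|f'|^2$, and then symmetrizing with $f^{-1}$---is precisely the proof given in those sources. No gaps.
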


\subsection{Modulus and Liouville measure}

Let $(a,b,c,d)$ be a quadruple of distinct points on $\mathbb{S}^1$ given in
the counterclockwise order. Denote by $\Gamma_{(a,b,c,d)}$ the family of all
{{locally rectifiable} curves $\gamma\subset\mathbb{D}$ connecting $(a,b)$ to
$(c,d)$, i.e. $(a,b)\cup \g \cup (c,d)$ is a connected subset of the plane.

\begin{lemma}
\label{lem:mod_liouville_measure} Let $(a,b,c,d)$ be a quadruple of points on
$\mathbb{S}^1$ in the counterclockwise order. Let $\Gamma_{(a,b,c,d)}$
consist of all curves $\gamma$ in $\mathbb{D}$ which connect $(a,b)\subset
\mathbb{S}^1$ with $(c,d)\subset \mathbb{S}^1$. Then
$$
\m(\Gamma_{(a,b,c,d)})-\frac{1}{\pi}\mathcal{L}([a,b]\times [c,d])-\frac{2}{\pi}\log 4\to 0
$$
as $mod(\Gamma_{(a,b,c,d)})\to\infty$, where $\mathcal{L}$ is the Liouville
measure.
\end{lemma}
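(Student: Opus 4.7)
My plan is to reduce by a conformal normalization to a one-parameter family of standard quadrilaterals, compute both quantities in closed form using the Schwarz--Christoffel map to a rectangle, and then invoke a single classical asymptotic of a complete elliptic integral. Both $\m(\Gamma_{(a,b,c,d)})$ and $\mathcal{L}([a,b]\times [c,d])$ are invariant under M\"obius transformations of $\mathbb{D}$ (the former by conformal invariance of the modulus, the latter because it is a function of a cross-ratio). Passing from $\mathbb{D}$ to the upper half-plane, I may therefore assume that the quadruple is placed at $(a,b,c,d)=(-1/k,-1,1,1/k)$ for a unique parameter $k=k(a,b,c,d)\in(0,1)$.

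Under the Schwarz--Christoffel map
$$\phi(z)=\int_0^z\frac{dt}{\sqrt{(1-t^2)(1-k^2t^2)}},$$
the upper half-plane is sent conformally onto the open rectangle $R_k=(-K,K)\times (0,K')$, where $K=K(k)$ and $K'=K(\sqrt{1-k^2})$ are the complete elliptic integrals of the first kind, and the arcs $[-1/k,-1]$ and $[1,1/k]$ are mapped onto the two vertical sides of $R_k$. By conformal invariance of the modulus, $\G_{(a,b,c,d)}$ is then identified with the family of curves in $R_k$ connecting its two vertical sides, so
$$\m(\G_{(a,b,c,d)})=\frac{K(k')}{2K(k)}.$$
A direct substitution into the Liouville formula displayed earlier in the paper yields
$$\mathcal{L}([-1/k,-1]\times [1,1/k])=\log\frac{(1+k)^2}{4k}.$$

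Finally I analyze the asymptotics as $k\to 0^+$, which is equivalent to $\m(\G_{(a,b,c,d)})\to\infty$ since $K(k)\to\pi/2$ stays bounded while $K(k')\to\infty$. Invoking the classical asymptotic $K(k')=\log(4/k)+o(1)$ as $k\to 0^+$, I get $\m(\G_{(a,b,c,d)})=\frac{1}{\pi}\log (4/k)+o(1)$, and combining with the Liouville computation gives
$$\m(\G_{(a,b,c,d)})-\frac{1}{\pi}\mathcal{L}([a,b]\times[c,d])-\frac{2}{\pi}\log 4=-\frac{2}{\pi}\log(1+k)+o(1),$$
which tends to $0$ as $k\to 0^+$. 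The real content of the argument is thus the classical elliptic-integral asymptotic $K(k')\sim\log(4/k)$; no serious obstacle is present beyond keeping the normalization consistent. If one prefers to avoid elliptic functions altogether, an alternative is to double $\G_{(a,b,c,d)}$ across one pair of boundary arcs, compare the result with the Gr\"otzsch ring, and then cite the classical Teichm\"uller--Gr\"otzsch asymptotic $\mu(r)=\log(4/r)+o(1)$ directly.
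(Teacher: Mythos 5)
Your proof is correct and follows essentially the same route as the paper: a M\"obius normalization of the quadruple followed by the classical $\log(4/\cdot)$ asymptotic, since your elliptic-integral identity $K(k')/(2K(k))$ is just the paper's $\frac{2}{\pi}\mu(r)$ in different notation (and your closing remark about doubling and citing $\mu(r)=\log(4/r)+o(1)$ is precisely what the paper does, normalizing to $(w_1,w_2,w_3,\infty)$ instead of your symmetric $(-1/k,-1,1,1/k)$). The only difference is cosmetic: your parameter $k$ is not exactly the cross-ratio parameter, which produces the harmless residual term $-\frac{2}{\pi}\log(1+k)=o(1)$, whereas the paper's choice $r=\sqrt{(w_3-w_2)/(w_3-w_1)}$ matches the Liouville measure exactly.
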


\begin{remark}
Note that simultaneously $\m(\Gamma_{(a,b,c,d)})\to\infty$ and
$\mathcal{L}([a,b]\times [c,d])\to\infty$.
\end{remark}

\begin{proof}Consider a conformal mapping of the unit disc onto
the upper half plane mapping the points $a,b,c,d\in\mathbb{S}$ to
$w_1,w_2,w_3,\infty \in \mathbb{R}\cup\{\infty\}$, respectively, so that
$-\infty<w_1<w_2<w_3<\infty$. By the conformal invariance of the modulus we
have
\begin{equation}\label{modmobius}
\m (\G_{(a,b,c,d)}) = \m
(\Omega_{(w_1,w_2,w_3,\infty)})
\end{equation}
where $\Omega_{(w_1,w_2,w_3,\infty )}$ is the family of curves connecting the
segment $[w_1,w_2]$ to the segment $[w_3,\infty ]$ in in the upper half
plane. Furthermore, a simple symmetry argument (cf. \cite[page 81]{LV}) shows
that the modulus of the family of arcs $\Omega_{(w_1,w_2,w_3,\infty )}$
connecting the segment $[w_1,w_2]$ to the segment $[w_3,\infty ]$ in
$\mathbb{C}$ satisfies
\begin{equation}\label{modconf}
\m( \Omega_{(w_1,w_2,w_3,\infty )})=\frac{2}{\pi}\mu (\sqrt{\frac{w_3-w_2}{w_3-w_1}}),
\end{equation}
where $\mu (r)$ is the $2\pi$-multiple of the modulus of the family of closed
curves in the unit disk, which separate the unit circle $\mathbb{S}^1$ and
the arc on the real axis from $0$ to $r$ with $0<r<1$ (cf. \cite[page
53]{LV}). Careful estimates on $\mu(r)$ then yield the following asymptotics:
\begin{equation}\label{modasymptotics}
\mu (r)-\log \frac{4}{r}\to 0,
\end{equation}
as $r\to 0$ (cf. \cite[page 62, (2.11)]{LV}). Let $w_1<w_2<w_3$ be three real
numbers.

The lemma now follows easily if we combine (\ref{modmobius}),(\ref{modconf})
and (\ref{modasymptotics}) together with the fact that Liouville measure is
invariant under M\"obius maps and therefore $\mathcal{L}([w_1,w_2]\times
[w_3,\infty ])=\log\frac{w_3-w_1}{w_3-w_2}$.
\end{proof}

\section{Modulus of vertical families}

 Theorem
\ref{thm:arbitrary} states that the limiting behaviour of moduli of certain
families of curves is completely determined by the subfamily $\G_v$ of
vertical curves in $D$. For this reason we start by calculating the modulus
of a general family of vertical curves in $\mathbb{C}$.

\begin{lemma}\label{lemma:vertical}
Let $E\subset\mathbb{R}$ be a measurable set and let $\G_v$ be a family of
vertical intervals $\{\g(x)\}_{x\in E}$, where $\g(x)$ is an interval of
length $|\g(x)|>0$ contained in the vertical line passing through $x\in E
\subset\mathbb{R}$. Then the modulus of $\G_v$ can be computed using the
following Lebesgue integral
  \begin{eqnarray}
    \m (\G_v)= \int_{E} \frac{dx}{|\g(x)|}.
  \end{eqnarray}
\end{lemma}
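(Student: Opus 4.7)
The plan is the standard two-sided estimate used to evaluate moduli of ``rectangular'' families: construct an explicit admissible metric to obtain the upper bound, and use Cauchy--Schwarz together with Fubini to obtain the matching lower bound.

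First I would prove the inequality $\m(\G_v)\le \int_E dx/|\g(x)|$. Define the Borel function
\[
\rho(z)=\begin{cases}1/|\g(x)|, & z=x+iy\text{ with }x\in E\text{ and }y+ix\in\g(x),\\ 0,&\text{otherwise.}\end{cases}
\]
For each $\g(x)\in\G_v$ one has $\int_{\g(x)}\rho\,|dz|=|\g(x)|/|\g(x)|=1$, so $\rho$ is $\G_v$-admissible. Fubini then gives
\[
\int_{\mathbb C}\rho^2\,dA=\int_E\frac{|\g(x)|}{|\g(x)|^2}\,dx=\int_E\frac{dx}{|\g(x)|},
\]
and the upper bound follows from the definition of modulus.

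For the reverse inequality, let $\rho$ be any $\G_v$-admissible Borel function. For each $x\in E$ admissibility applied to the vertical segment $\g(x)$ gives $\int_{\g(x)}\rho(x,y)\,dy\ge 1$, so by Cauchy--Schwarz,
\[
1\le\left(\int_{\g(x)}\rho(x,y)\,dy\right)^{\!2}\le |\g(x)|\int_{\g(x)}\rho(x,y)^2\,dy,
\]
i.e. $\int_{\g(x)}\rho(x,y)^2\,dy\ge 1/|\g(x)|$. Integrating over $x\in E$ and applying Fubini (legitimate by nonnegativity),
\[
\int_{\mathbb C}\rho^2\,dA\ge\int_E\!\int_{\g(x)}\rho(x,y)^2\,dy\,dx\ge\int_E\frac{dx}{|\g(x)|}.
\]
Taking the infimum over admissible $\rho$ yields $\m(\G_v)\ge\int_E dx/|\g(x)|$, which combined with the previous step proves the lemma.

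The proof is essentially routine; no serious obstacle arises. The only mild subtleties are measurability (the function $x\mapsto 1/|\g(x)|$ is measurable because $E$ is measurable and the lengths $|\g(x)|$ come from a measurable family of vertical segments, and the admissibility of the explicit $\rho$ above requires the set $\bigcup_{x\in E}\g(x)$ to be Borel, which may be arranged by choosing a Borel representative of $\{\g(x)\}_{x\in E}$) and the standard convention that a curve family with a zero-length member has infinite modulus, which is why the hypothesis $|\g(x)|>0$ is imposed pointwise.
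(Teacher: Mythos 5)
Your proof is correct, and the upper bound (the explicit metric $\rho_0=1/|\g(x)|$ supported on the union of the segments, checked admissible, then integrated via Fubini) is exactly what the paper does. Where you diverge is the lower bound: you apply Cauchy--Schwarz fiberwise to an arbitrary admissible $\rho$ on each vertical segment $\g(x)$, getting $\int_{\g(x)}\rho^2\,dy\geq 1/|\g(x)|$, and then integrate over $E$; the paper instead invokes Beurling's criterion for extremality of $\rho_0$, verifying its two hypotheses (unit $\rho_0$-length on every $\g(x)$, and $\iint h\rho_0\,dxdy\geq 0$ whenever $\int_{\g(x)}h\,dy\geq 0$ for all $x$, again by Fubini). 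The two arguments are close cousins --- Beurling's criterion is itself proved by a global Cauchy--Schwarz --- but yours is more elementary and self-contained, requiring no external extremality lemma, while the paper's route packages the verification into a reusable citation and exhibits $\rho_0$ as the extremal metric explicitly. Your closing remarks on measurability (of $x\mapsto|\g(x)|$ and of $\bigcup_{x\in E}\g(x)$) address a point the paper passes over silently; the only blemish is the typo ``$y+ix\in\g(x)$'' for ``$x+iy\in\g(x)$'' in the definition of $\rho$, which is harmless.
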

\begin{proof}
Define
\begin{equation}\label{extremalmetric}
  \rho_0(x,y)=
  \begin{cases}
    |\g(x)|^{-1}, &\mbox{ for  }  (x,y) \in \g(x),\\
    0, &\mbox{ otherwise}.
  \end{cases}
\end{equation}
Since every $\g(x)\in\G_v$ is a vertical interval, we have that $\int_{\g(x)}
\rho_0(x,y) |dz| = |\g(x)|^{-1} \int_{\g(x)} dy = 1.$ Thus $\rho_0$ is
admissible for $\G_v$ and we have

\begin{eqnarray*}
\m \G_v
&\leq& \iint_{D} \rho_0^{2}(x,y){dxdy}
= \int_E \left(\int_{\g(x)} |\g(x)|^{-2} dy \right) {dx}  \\
&=& \int_E |\g(x)| \cdot {|\g(x)|^{-2}}{dx} = \int_E \frac{dx}{|\g(x)|}.
\end{eqnarray*}

To obtain the opposite inequality we will use the following well known
criterion of Beurling, cf. Theorem 4.4 in \cite{Ahlfors:Confinvariants}. Note
that in \cite{Ahlfors:Confinvariants} the criterion is formulated for the
extremal length rather than the modulus, but it is easily seen that the
formulation below is equivalent to the one in \cite{Ahlfors:Confinvariants}.
Recall that a $\G$-admissible metric $\rho_0$ is said to be extremal for the
family $\G$ if $\m(\G)=\iint_{D} \rho_0(x,y)^2 dxdy.$

\begin{lemma}[Beurling's criterion]
  The metric $\rho_0$ is extremal for $\G$ if there is a subfamily $\G_0\subset\G$ such that
  \begin{itemize}
    \item $\int_{\g}\rho_0 ds = 1, \forall \g\in\G_0$
    \item for any real valued $h$ in $D$ satisfying $\int_{\g}h ds \geq 0,
        \forall \g\in\G_0$ the following holds
    $$ \iint_{D} h\rho_0 dxdy \geq 0.$$
  \end{itemize}
\end{lemma}

As was noted above the function $\rho_0(x,y)$ defined in
(\ref{extremalmetric}) satisfies the first condition of the Beurling's
criterion. To check that the second condition is also satisfied note that for
a function $h$ in $D$ such that $\int_{\g(x)}h(x,y)dy\geq0$ for every $x\in
E$ we have
%$$\frac{1}{f(x)}\int_{L_x\cap D}h(x,y)dy\geq0, \forall x\in\mathbb{R}.$$
by Fubini's theorem
\begin{eqnarray*}
      \iint_{D} h(x,y)\rho_0(x,y) dxdy
  = \int_E |\g(x)|^{-1}\left[{\int_{\g(x)} h(x,y) dy}\right] dx \geq 0,
\end{eqnarray*}
since $|\g(x)|>0, \forall x\in \mathbb{R}$. Thus $\rho_0$ is  extremal for
$\G_v$ (in this case $\G_0=\G_v$).
\end{proof}

\section{The domains under the graphs of functions}

Given a function $f:(A,B)\to (0,\infty)$ it is well known that the set
\begin{equation}
D:=\{(x,y): x\in(A,B), 0<y<f(x)\}
\end{equation}
(a.k.a. hypograph of $f$) is open if and only if $f$ is a lower-semicontiuous
function. If this is the case we will call $D$ the \emph{graph domain of the
function $f$}. In this section we prove Theorem \ref{thm:arbitrary} by first
proving it in the case when $f$ is a continuous function and then by
approximating an arbitrary lower semicontinuous function by an increasing
sequence of continuous ones.

\subsection{A general estimate} To begin we establish some estimates which hold for an arbitrary
curve family $\G$ in any planar domain $D$ of finite area. To formulate our
result we will need a notation for a subfamily of ``almost vertical" curves
in $\G$. Namely, for $\eta>0$ we define subfamilies $\G_{<\eta}$ and
$\G_{\geq\eta}$ of $\G$ as follows
  \begin{eqnarray*}
    \G_{\geq\eta}&=&\{\g\in\G : |\pi_1(\g)|\geq\eta\},\\
    \G_{<\eta}&=&\{\g\in\G : |\pi_1(\g)|<\eta\},
  \end{eqnarray*}
where $|\pi_1(\g)|$ is the length of the the vertical projection of $\g$ onto
the real axis.
\begin{lemma}\label{lem:general}
  Let $D$ be an arbitrary finite area domain in $\mathbb{C}$ and $\G$ be a family of
  locally rectifiable curves in $D$. Let $T_{\eps}(x,y) = (x,\eps y)$ and $\G^{\eps} = T_{\eps} (\G)$. Then
\begin{equation}\label{modestimates}
\m (\G_v) \leq \limsup_{\eps\to 0} \eps \cdot \m(\G^{\eps}) \leq \lim_{\eta\to0^{+}} \m(\G_{<\eta}).
\end{equation}
\end{lemma}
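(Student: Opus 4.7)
The plan is to prove the two inequalities separately, each by constructing an explicit admissible metric tailored to the transformation $T_{\eps}$.

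For the lower bound $\m(\G_v) \leq \limsup_{\eps\to 0}\eps\,\m(\G^{\eps})$, I will actually establish the stronger pointwise bound $\m(\G_v) \leq \eps\,\m(\G^{\eps})$ for every $\eps > 0$. Given any metric $\rho$ admissible for $\G^{\eps}$ on $D_{\eps}$, define $\tilde\rho(x,y) := \eps\,\rho(x,\eps y)$ on $D$. For a vertical segment $\g \in \G_v$, the image $T_{\eps}(\g)$ is again a vertical segment, and the substitution $w = \eps y$ shows $\int_{\g}\tilde\rho\,|dz| = \int_{T_{\eps}(\g)}\rho\,|dz| \geq 1$, so $\tilde\rho$ is $\G_v$-admissible. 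The same substitution applied to the area integral yields $\iint_{D}\tilde\rho^{2}\,dxdy = \eps\,\iint_{D_{\eps}}\rho^{2}\,dudv$, and taking the infimum over $\rho$ gives the claim. The finite-area hypothesis is not required here.

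For the upper bound $\limsup_{\eps\to 0}\eps\,\m(\G^{\eps}) \leq \lim_{\eta\to 0^{+}}\m(\G_{<\eta})$, I fix $\eta>0$ and split $\G^{\eps} = \G^{\eps}_{<\eta}\cup\G^{\eps}_{\geq\eta}$, which is consistent with the decomposition of $\G$ because $T_{\eps}$ preserves horizontal projections. The non-almost-vertical part is handled by the constant metric $\eta^{-1}\chi_{D_{\eps}}$, admissible for $\G^{\eps}_{\geq\eta}$ since every such curve has Euclidean length at least its horizontal projection $\geq\eta$; this gives $\m(\G^{\eps}_{\geq\eta}) \leq |D_{\eps}|/\eta^{2} = \eps|D|/\eta^{2}$, so $\eps\,\m(\G^{\eps}_{\geq\eta}) = O(\eps^{2}) \to 0$. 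This is the one place where the finite area of $D$ is used. For the almost-vertical part, given any $\rho_0$ admissible for $\G_{<\eta}$ on $D$, set $\rho_{\eps}(u,v) := \eps^{-1}\rho_0(u,v/\eps)$ on $D_{\eps}$. Parameterizing $\g\in\G_{<\eta}$ as $(x(t),y(t))$ and using the elementary inequality $\sqrt{a^{2}+\eps^{2}b^{2}} \geq \eps\sqrt{a^{2}+b^{2}}$, valid for $\eps\leq 1$, one obtains $\int_{\g^{\eps}}\rho_{\eps}\,|dz| \geq \int_{\g}\rho_{0}\,|dz| \geq 1$, so $\rho_{\eps}$ is $\G^{\eps}_{<\eta}$-admissible. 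A change of variables shows $\iint \rho_{\eps}^{2} = \eps^{-1}\iint\rho_0^{2}$, and taking the infimum over $\rho_0$ gives $\eps\,\m(\G^{\eps}_{<\eta}) \leq \m(\G_{<\eta})$. Combining the two estimates via subadditivity yields $\eps\,\m(\G^{\eps}) \leq \m(\G_{<\eta}) + O(\eps^{2})$; passing first to $\limsup_{\eps\to 0}$ and then to $\eta\to 0^{+}$ completes the proof.

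The main obstacle is choosing the correct scaling of the admissible metric for $\G^{\eps}_{<\eta}$: the prefactor $\eps^{-1}$ in front of $\rho_0(u,v/\eps)$ is exactly what is needed so that the $\eps^{-1}$ blow-up in the area integral, coming from the vertical contraction of $D_{\eps}$, cancels against the multiplicative factor $\eps$ we wish to attach to the modulus, while the estimate $\sqrt{a^{2}+\eps^{2}b^{2}}\geq \eps\sqrt{a^{2}+b^{2}}$ keeps the metric admissible for all $\eps\leq 1$ rather than only in the limit. Any other normalization either destroys admissibility or produces an unbounded right-hand side.
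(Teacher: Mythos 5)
Your proof is correct and follows essentially the same route as the paper: the same decomposition $\G^{\eps}=\G^{\eps}_{<\eta}\cup\G^{\eps}_{\geq\eta}$, the same constant metric $\eta^{-1}\chi_{D_{\eps}}$ showing $\eps\cdot\m(\G^{\eps}_{\geq\eta})=O(\eps^{2})$, and the same order of limits at the end. The only real difference is that where the paper quotes two external facts --- the formula $\m(\G_v)=\int_E dx/|\g(x)|$ from its lemma on vertical families for the left inequality, and the $\eps^{-1}$-quasiconformality of $T_{\eps}$ for the bound $\m(\G^{\eps}_{<\eta})\leq\eps^{-1}\m(\G_{<\eta})$ --- you establish the needed scaling inequalities directly by transporting admissible metrics, using the elementary estimate $\sqrt{a^{2}+\eps^{2}b^{2}}\geq\eps\sqrt{a^{2}+b^{2}}$. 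This makes your lower bound slightly cleaner and more self-contained: the paper's version of that step is written in terms of a graph function $f$ and an interval $[p,q]$ imported from a later section, whereas your pullback $\tilde\rho(x,y)=\eps\rho(x,\eps y)$ applies verbatim to an arbitrary family of vertical segments in an arbitrary domain, exactly as the lemma is stated.
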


\begin{proof}
By Lemma \ref{lemma:vertical} we have
\begin{eqnarray*}
 \m(\G_v) = \int_p^q \frac{dx}{f(x)} = \eps \int_p^q \frac{dx}{\eps f(x)}
 = \eps \cdot \m (\G^{\eps}_v),
\end{eqnarray*}
Since $\G_v^{\eps}\subset\G^{\eps}$  monotonicity of the modulus imples that
$\eps \cdot \m (\G^{\eps}_v)\leq \eps \cdot \m (\G^{\eps})$. This immediately
yields the first inequality in (\ref{modestimates}).

To prove the right inequality in  (\ref{modestimates}) note that by the
subadditivity of the modulus we have
\begin{eqnarray*}
\limsup_{\eps\to0} \eps \cdot \m (\G^{\eps})
&\leq& \limsup_{\eps\to0} \eps \cdot \m (\G^{\eps}_{\geq\eta}) + \limsup_{\eps\to0} \eps \cdot \m (\G^{\eps}_{<{\eta}}).
\end{eqnarray*}
Note that $\eps \cdot \m (\G^{\eps}_{\geq{\eta}}) \to 0$ as $\eps\to0$.
Indeed, since for every $\g\in \G^{\eps}_{\geq \eta}$ we have that the length
of $\g$ is at least $\eta$ it follows that $\rho(x)=\chi_{D^{\eps}}(x)/\eta$
is admissible for $\G^{\eps}_{\geq\eta}$ and
\begin{eqnarray*}
\eps\cdot \m (\G^{\eps}_{\geq\eta})
\leq \eps \cdot \int_{D^{\eps}} (1/\eta)^2 dxdy \leq \frac{\eps}{\eta^2} \cdot A(D^{\eps})
\leq \frac {\eps^{2}A(D)}{\eta^2}  \xrightarrow[\eps\to 0 ]{} 0,
\end{eqnarray*}
where $A(D)$ denotes the two dimensional area of a domain
$D\subset\mathbb{C}$. Thus we have that
\begin{eqnarray}\label{estimate:general}
\limsup_{\eps\to0} \eps \cdot \m (\G^{\eps})
&\leq& \limsup_{\eps\to0} \eps \cdot \m (\G^{\eps}_{<{\eta}}).
\end{eqnarray}
Now, since $\G^{\eps}_{<\eta} = T_{\eps} (\G_{<\eta})$ and $T_{\eps}$ is
$\eps^{-1}$-quasiconformal we have $\m(\G^{\eps}_{<\eta}) \leq
\eps^{-1}\m(\G_{<\eta}),$ and from inequality (\ref{estimate:general}) it
follows that
\begin{equation*}
\limsup_{\eps\to0} \eps \cdot \m (\G^{\eps})
\leq
\m (\G_{<{\eta}}).
\end{equation*}
Since the last inequality holds for every $\eta>0$ and $\m(\G_{<\eta})$ is
non-decreasing in $\eta$ we obtain the right hand side inequality in
(\ref{modestimates}) by taking $\eta$ to $0$.
\end{proof}

\begin{remark}\label{remark:nicefamilies}
Lemma \ref{lem:general} implies that to prove Theorem \ref{thm:arbitrary} and
to obtain equality (\ref{equality:main}) for a family $\G$ in $D$ it is
enough to show that the following inequality holds
\begin{equation}\label{nicefamilies}
  \m(\G_v) \geq \lim_{\eta\to0^{+}} \m(\G_{<\eta}).
\end{equation}
\end{remark}

\begin{remark}
Inequality (\ref{nicefamilies}) does not hold always. For instance let $\G$
be the collection of all the curves in the unit square $[0,1]^2$ connecting
the horizontal sides $[0,1]\times\{0\}$ and $[0,1]\times\{1\}$, excluding the
family of vertical segments $\{x\}\times[0,1], 0\leq x\leq 1.$ Then
$\m(\G_{<\eta})=1$ for every $\eta>0$ while $\m(\G_v)=\m(\emptyset) = 0$ and
the inequality (\ref{nicefamilies}) fails.
\end{remark}

\subsection{Domains under graphs of continuous functions}
Let $D$ be a finite area domain in $\mathbb{C}$ under the graph of a
continuous function $f:(A,B)\to\mathbb{R}_{\geq 0}\cup\{\infty\}$, where
$(A,B)$ could be a finite or an infinite interval including
$(A,B)=\mathbb{R}$. Let $(a,b,c,d)$ be in the given cyclic order on the
boundary $\partial D$ of the domain $D$ and let $\G$ be the family of curves
in $D$ connecting $(a,b)$ to $(c,d)$. Denote by $(p,q)$ the intersection
$(a,b)\cap (c,d)$.
\begin{theorem}
\label{thm:continuous}With the notations as above the following equalities
hold
\begin{equation}\label{equality:continuous}
\lim_{\eps\to 0}\eps \cdot \m(\G^{\eps}) =\int_p^q\frac{dx}{f(x)}=\m(\G_{v}).
\end{equation}
\end{theorem}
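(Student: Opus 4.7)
The second equality in (\ref{equality:continuous}) is immediate from Lemma \ref{lemma:vertical} applied to $E = (p,q)$ with the vertical intervals $\g(x) = \{x\} \times (0, f(x))$, so the substance of the theorem lies in the first equality. By Lemma \ref{lem:general} combined with Remark \ref{remark:nicefamilies}, it suffices to prove
$$
\lim_{\eta \to 0^+} \m(\G_{<\eta}) \;\leq\; \int_p^q \frac{dx}{f(x)} \;=\; \m(\G_v).
$$
I will assume without loss of generality that $(a,b)$ lies on the bottom edge $\{y=0\}$ and $(c,d)$ on the graph $\{y=f(x)\}$, with $x$-projections overlapping in $(p,q)$; otherwise $\G_v = \emptyset$, while every curve in $\G$ must traverse a definite horizontal distance, forcing $\G_{<\eta}=\emptyset$ for small $\eta$ and making the inequality trivial.

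The strategy is to build, for each $\eta > 0$, an explicit $\G_{<\eta}$-admissible metric $\rho_\eta$ whose $L^2$-mass tends to $\m(\G_v)$ as $\eta \to 0$. Using continuity of $f$, set
$$
f_\eta^-(x) := \inf_{|x'-x|\leq\eta} f(x'), \qquad \rho_\eta(x,y) := \frac{\chi_{E_\eta}(x)}{f_\eta^-(x)},
$$
where $E_\eta$ is the $\eta$-neighbourhood of $(p,q)$ in $(A,B)$. To verify admissibility, fix $\g \in \G_{<\eta}$: one end of $\g$ approaches some $(x_0, 0) \in (a,b)$ and the other approaches some $(x_1, f(x_1)) \in (c,d)$, with $\pi_1(\g)$ contained in an interval of length $<\eta$ containing both $x_0$ and $x_1$. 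In particular $|x - x_1| < \eta$ for every $x \in \pi_1(\g)$, forcing $f_\eta^-(x) \leq f(x_1)$. Since the total variation of the $y$-coordinate along $\g$ is at least $f(x_1)$ (the gap between the $y$-coordinates of the two endpoints),
$$
\int_\g \rho_\eta \, ds \;\geq\; \int_\g \rho_\eta(x)\, |dy| \;\geq\; \frac{1}{f(x_1)} \cdot f(x_1) \;=\; 1.
$$

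It then remains to analyse $\int_D \rho_\eta^2 \, dxdy = \int_{E_\eta} f(x)/f_\eta^-(x)^2 \, dx$ as $\eta \to 0$. Continuity of $f$ gives $f_\eta^-(x) \to f(x)$ pointwise on $(p,q)$, so the integrand tends pointwise to $1/f(x)$. The main technical obstacle is upgrading this to convergence of the integrals, because near the endpoints of $(p,q)$ or near points where $f$ is close to $0$ the integrand may lack a uniform $L^1$-dominator. I plan to handle this by first restricting to a compact sub-interval $[p', q'] \subset (p,q)$ on which $f$ is bounded above and below by positive constants, where the bounded convergence theorem applies, and then letting $[p', q'] \uparrow (p,q)$ using monotonicity of $\int dx/f(x)$; in the case $\m(\G_v) = \infty$ the desired upper bound is automatic. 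Passing to the limit then yields $\lim_{\eta \to 0^+} \m(\G_{<\eta}) \leq \m(\G_v)$, which combined with Lemma \ref{lem:general} completes the proof.
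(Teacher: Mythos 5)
Your overall route is genuinely different from the paper's. After the common reduction to $\lim_{\eta\to0^+}\m(\G_{<\eta})\leq\int_p^q dx/f$, the paper argues by duality: it partitions $[p,q]$, uses subadditivity, and observes that each subfamily overflows the family of curves joining the horizontal sides of a thin rectangle $[x_i-\eta,x_{i+1}+\eta]\times[0,\min f]$, so its modulus is at most $(x_{i+1}-x_i+2\eta)/\min_{[x_i-\eta,x_{i+1}+\eta]}f$; summing and letting $\eta\to0$ produces an upper Riemann sum for $\int_p^q dx/f$. You instead exhibit an explicit near-extremal admissible metric $\rho_\eta=\chi_{E_\eta}/f_\eta^-$. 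Your admissibility verification is essentially correct (with the small caveat that one should check $\pi_1(\g)$ lies in the $\eta$-neighbourhood of $[p,q]$, which follows since $\pi_1(\g)$ is connected, has diameter $<\eta$, and meets both $(a,b)$ and the projection of $(c,d)$). This is an attractive, more constructive argument, and in principle it buys the same conclusion.

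The gap is in the final limiting step. You correctly identify that $f(x)/f_\eta^-(x)^2\to 1/f(x)$ pointwise and that domination is the issue, but the proposed fix goes in the wrong direction: you need an \emph{upper} bound $\limsup_{\eta\to0}\int_{E_\eta}f/(f_\eta^-)^2\,dx\leq\int_p^q dx/f$, and restricting to $[p',q']\subset(p,q)$ and invoking monotonicity only controls the inner contribution, saying nothing about $\int_{E_\eta\setminus[p',q']}f/(f_\eta^-)^2\,dx$ --- which is exactly where the integrand can blow up. Indeed, if $f$ tended to $0$ at an endpoint of $[p,q]$ (say $f(x)=\sqrt{x}$ near $p=0$, where $\int dx/f$ is still finite), then $f_\eta^-\equiv 0$ on a set of positive measure and $\int\rho_\eta^2=\infty$ for every $\eta$, so your metric would give no bound at all. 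What closes the gap is the observation --- which the paper makes explicitly in the lower semicontinuous case --- that since $a,b,c,d$ are four distinct prime ends and $D$ is a domain, $f$ is bounded below by some $c>0$ on a full neighbourhood of $[p,q]$; then $f_\eta^-\geq c$ on $E_\eta$ for small $\eta$, the integrand is dominated on all of $E_\eta$ by $f/c^2$, which is in $L^1$ because $D$ has finite area (note $f$ need not be bounded above, as the paper allows $f$ to take values in $[0,\infty]$), and dominated convergence finishes the argument in one step with no exhaustion needed. As written, your proof is missing this positivity argument and the correct dominator, so the last step does not yet go through.
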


\begin{proof}
The second equality in (\ref{equality:continuous}) holds by Lemma
\ref{lemma:vertical}. By Remark  \ref{remark:nicefamilies} we only need to
show that
\begin{equation}\label{continuous-is-nice}
\lim_{\eta\to0^{+}} \m(\G_{<\eta})\leq \int_{p}^q
\frac{dx}{f(x)}.
\end{equation}

Now, let $p=x_0<x_1<\ldots<x_n=q$ be a partition of the interval $[p,q]$.
Then by subadditivity of the modulus we obtain
\begin{eqnarray*}
\m(\G_{<\eta})
  &\leq&  \sum_{i=0}^{n-1} \m\{\g\in\G_{<\eta} : \g(0)\in[x_i,x_{i+1}]\},
\end{eqnarray*}
Considering the rectangles
$$R_i=[x_i-\eta, x_{i+1}+\eta]\times \min_{[x_i-\eta,x_{i+1}+\eta]} f$$ we note that
every curve from the family $\{\g\in\G_{<\eta} : \g(0)\in[x_i,x_{i+1}]\}$
contains a subcurve in $R_i$ which connects the horizontal sides of the
rectangle. Therefore, by the property of overflowing, we have for every
$i=0,\ldots,n-1$ the estimate
$$\m\{\g\in\G_{<\eta} : \g(0)\in[x_i,x_{i+1}]\}
\leq \frac{x_{i+1}-x_i+2\eta}{
\min_{[x_i-\eta,x_{i+1}+\eta]} f}.$$
Summing up over $i$ we obtain
\begin{eqnarray*}
\m (\G_{<\eta})
\leq
\sum_{i=0}^{n-1}  \frac{x_{i+1}-x_i+2\eta}{\min_{[x_i-\eta,x_{i+1}+\eta]} f}.
\end{eqnarray*}
Taking $\eta$ to $0$ we obtain
\begin{eqnarray*}
  \lim_{\eta\to0^{+}} \m (\G_{<{\eta}})\leq \sum_{i=0}^{n-1}  \frac{x_{i+1}-x_i}{\min_{[x_i,x_{i+1}]} f},
\end{eqnarray*}
where the sum on the right is a Riemann sum for the integral
$\int_p^q\frac{dx}{f(x)}$ and therefore can be taken to be less than
$\int_p^q\frac{dx}{f(x)}+\delta$ for every $\delta>0$. This proves
(\ref{continuous-is-nice}) and thus the theorem.
\end{proof}

\subsection{General graph domains}
In this section, we consider a graph domain $D$ in $\mathbb{C}$ of finite
Euclidean area under the graph of an arbitrary lower semicontinuous function
$f:(A,B)\to[0,\infty]$.

%We first show that $f$ is necessarily a lower semicontinuous function.
%
%\begin{lemma}
%\label{lem:semicont} Let $D$ be a domain under the graph of a function
%$f:(A,B)\to\mathbb{R}_{\geq 0}\cup\{\infty\}$. Then $f$ is necessarily a
%lower semicontinuous function.
%\end{lemma}
%
%\begin{proof}
%Let $0<y<f(x)$. Then $x+yi\in D$ and there exists $r>0$ such that the
%Euclidean ball $B_r(x+yi)$ with radius $r>0$ centered at $x+yi$ is contained
%in $D$.  It follows that $\liminf_{t\to x}f(t)>y$ for each $0<y<f(x)$. Thus
%$\liminf_{t\to x}f(t)\geq f(x)$ and $f$ is lower semicontinuous.
%\end{proof}

%\begin{example}
%Define $f:(0,1)\to\mathbb{R}$ by $f(x)=1/2$ for $0<x<1/2$, $f(1/2)=3/4$ and
%$f(x)=1$ for $x\in (1/2,1)$. The function $f$ is not lower semicontinuous at
%$x=1/2$ and the region $D$ under the graph of $f$ is not a domain because the
%points of the vertical segment from $i/2$ to $3i/4$ are not interior for $D$.
%However, if we modify the function such that $f(1/2)=1/2$ then it is lower
%semicontinuous and the region $D$ is a domain in $\mathbb{C}$.
%\end{example}

We consider the set of prime ends for the domain $D$. If $D$ is conformally
mapped onto the unit disk $\mathbb{H}$ then the set of prime ends of $D$ is
in a one to one correspondence with the unit circle $\mathbb{S}^1=\partial
B_1$ (cf. \cite{Pomm}). The set of prime ends inherits an orientation from
$\mathbb{S}^1$. Let $\{ s_i\}_{i=1}^{\infty}$ be a system of cross-cuts
defining prime end $a$ of $D$. A continuous ray $r:[0,1)\to D$ is said to
have an endpoint $r(1)$ equal to the prime end $a$ if there exists $t_i<1$
such that $r((t_i,1))$ is contained in the component of $D_i=D\setminus s_i$
not containing $r(0)$ for all large $i$.

\begin{lemma}
\label{lem:prime_ends} The imprint $I(a)=\cap_i\bar{D}_i$ of a prime end $a$
of the domain $D$ lies on a vertical line, where $\{ D_i\}_i$ is the system
of subdomains of $D\setminus s_i$ defining $a$.
\end{lemma}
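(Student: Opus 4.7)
The plan is to argue by contradiction. Suppose the imprint $I(a)=\bigcap_i \bar{D}_i$ contains two points $p_1=(x_1,y_1)$ and $p_2=(x_2,y_2)$ with $x_1<x_2$. The goal will be to force a positive uniform lower bound on $\mathrm{diam}(s_i)$, contradicting the null-chain hypothesis $\mathrm{diam}(s_i)\to 0$. First one picks two distinct points $x_1^{*}<x_2^{*}$ in $(x_1,x_2)\cap(A,B)$; connectedness of $D$ forces $f(x_j^{*})>0$, so the open vertical segments
\[
V_j^{*}:=\{x_j^{*}\}\times(0,f(x_j^{*}))=D\cap L_j^{*},\qquad L_j^{*}:=\{(x,y)\in\mathbb{C}:x=x_j^{*}\},
\]
are nonempty connected subsets of $D$ for $j=1,2$.

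The main step is the following dichotomy, valid for every index $i$ and every $j\in\{1,2\}$: either $s_i\cap L_j^{*}\neq\emptyset$, or $V_j^{*}\subset D_i$. To see it, note that $p_1,p_2\in\bar{D}_i$ lie on opposite sides of $L_j^{*}$, so the open connected set $D_i$ contains points of $D$ with $x$-coordinate on both sides of $x_j^{*}$. Connectedness forces $D_i\cap L_j^{*}\neq\emptyset$, and since $L_j^{*}\cap D=V_j^{*}$ this gives $D_i\cap V_j^{*}\neq\emptyset$. If in addition $s_i\cap L_j^{*}=\emptyset$, the connected vertical segment $V_j^{*}$ lies entirely in one component of $D\setminus s_i$, and the preceding nonemptiness forces $V_j^{*}\subset D_i$.

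Finally, the inclusion $V_j^{*}\subset D_i$ is preserved as $i$ decreases (since $D_i\supset D_{i+1}$), so the set of indices where it holds is either all of $\mathbb{N}$ or an initial segment $\{1,\ldots,N_j-1\}$. The first possibility is ruled out by $V_j^{*}\neq\emptyset$ together with the standard null-chain property $\bigcap_i D_i=\emptyset$ inside $D$; hence $V_j^{*}\not\subset D_i$ for all $i\geq N_j$, and by the dichotomy $s_i\cap L_j^{*}\neq\emptyset$ for all $i\geq N_j$. Consequently, for $i\geq\max(N_1,N_2)$ the cross-cut $s_i$ meets both vertical lines $L_1^{*}$ and $L_2^{*}$, giving $\mathrm{diam}(s_i)\geq x_2^{*}-x_1^{*}>0$ and the desired contradiction. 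The delicate step is the dichotomy; its proof hinges on the fact that each vertical slice of the graph domain is a single connected interval, a feature specific to domains bounded by graphs of functions.
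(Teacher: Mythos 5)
Your proof is correct and follows essentially the same route as the paper's: both argue by contradiction, using the fact that vertical slices of a graph domain are single connected intervals to force $\mathrm{diam}(s_i)$ to stay bounded below by a fixed positive quantity, contradicting the null-chain condition. If anything, your version is slightly more careful, since the dichotomy combined with the appeal to $\bigcap_i D_i=\emptyset$ makes explicit why the cross-cuts must eventually meet the intermediate vertical lines, a point the paper compresses into the (read literally, slightly too strong) assertion that each $D_i$ lies in the union of vertical segments emanating from $s_i$.
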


\begin{proof}
Assume on the contrary that $z_1,z_2\in\cap_i\bar{D}_i$ with
$Re(z_1)<Re(z_2)$. Since $D$ is the domain under the graph of $f$, it follows
that each $D_i$ is a subset of the union of vertical segments between points
of $s_i$ and the graph of $f$, or the union of the vertical segments between
points of $s_i$ and the real axis. In either case, the vertical projection
onto the $x$-axis of each $s_i$ contains the interval $(Re(z_1),Re(z_2))$.
Thus the length of $s_i$ does not converge to $0$. Contradiction.
\end{proof}

We denote by $\pi_1(a)$ the $x$-coordinate of the prime end $a$. Let us
assume that the interval $(a,b)$ is on the bottom side of $D$ (belongs to
$x$-axis) and the interval $(c,d)$ is on the top side of $D$ (lies on the
part of the boundary of $D$ above the $x$-axis determined by the graph of
$f$) and $\pi_1(c)>\pi_1(d)$ (since $a,b,c,d$ are given in a cyclic order).
We consider the intersection interval $(a,b)\cap (\pi_1(d),\pi_1(c))=(p,q)$.

\begin{theorem}
Let $D$ be a finite area domain in $\mathbb{C}$ under the graph of an
arbitrary function $f:(A,B)\to[0,\infty]$ in the above sense. Then for any
quadruple $(a,b,c,d)$ of prime ends of $D$ in the given cyclic order, we have
$$
\lim_{\eps\to 0}\eps \cdot \m(\Gamma^{\eps}_{(a,b,c,d)}) =\int_p^q\frac{dx}{f(x)}=\m(\Gamma_{v})
$$
where $\Gamma_{v}$ is the family of vertical lines that connect the interval
$(a,b)$ to the interval $(c,d)$, and $[p,q]$ is the interval of the
$x$-coordinates of $\Gamma_{v}$.
\end{theorem}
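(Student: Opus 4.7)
The plan is to reduce the theorem, via Lemma~\ref{lem:general}, to the single one-sided inequality
\[
\lim_{\eta\to 0^{+}}\m(\G_{<\eta})\;\leq\;\int_{p}^{q}\frac{dx}{f(x)}.
\]
Indeed, Lemma~\ref{lemma:vertical} gives $\m(\G_{v})=\int_{p}^{q}dx/f(x)$, and Lemma~\ref{lem:general} sandwiches $\limsup_{\eps\to 0}\eps\cdot\m(\G^{\eps})$ between $\m(\G_{v})$ and $\lim_{\eta\to 0^{+}}\m(\G_{<\eta})$, so once the displayed estimate is in hand the whole chain collapses to equality. The Riemann-sum argument in the proof of Theorem~\ref{thm:continuous} does not survive the passage to lower semicontinuous $f$, because $1/f$ is then only upper semicontinuous and in general fails to be Riemann integrable, so a different strategy is needed.

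The main new ingredient I would introduce is an explicit admissible metric for $\G_{<\eta}$. Set
\[
h_{\eta}(x)\;:=\;\inf_{|x'-x|\leq \eta}f(x'),\qquad \rho_{\eta}(x,y)\;:=\;\frac{\chi_{D}(x,y)\,\chi_{[p-\eta,\,q+\eta]}(x)}{h_{\eta}(x)}.
\]
For any $\gamma\in\G_{<\eta}$ the horizontal projection of $\gamma$ is an interval $[x_{\min},x_{\max}]$ of length strictly less than $\eta$, so $[x_{\min},x_{\max}]\subset[x(s)-\eta,x(s)+\eta]$ for every $s$, giving $h_{\eta}(x(s))\leq m:=\min_{[x_{\min},x_{\max}]}f$; the value $m>0$ is attained because $f$ is lower semicontinuous and strictly positive on the compact interval in question. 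By Lemma~\ref{lem:prime_ends} the terminal prime end of $\gamma$ has imprint on a vertical line $\{x=x_{1}\}$ with $x_{1}\in[x_{\min},x_{\max}]$, and since it lies on the top part of $\partial D$ its imprint sits at height at least $f(x_{1})\geq m$; hence the vertical variation of $\gamma$ is at least $m$ and $\int_{\gamma}\rho_{\eta}\,|dz|\geq(1/m)\cdot m=1$. Admissibility then yields
\[
\m(\G_{<\eta})\;\leq\;\iint\rho_{\eta}^{2}\,dxdy\;=\;\int_{p-\eta}^{q+\eta}\frac{f(x)}{h_{\eta}(x)^{2}}\,dx.
\]

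The last step is a monotone convergence argument as $\eta\to 0^{+}$. Since $f$ is lower semicontinuous and $h_{\eta}\leq f$ for every $\eta$, one has $h_{\eta}(x)\nearrow f(x)$ pointwise as $\eta\searrow 0$, so $f/h_{\eta}^{2}\searrow 1/f$ pointwise and monotonically. The positive lower bound of $f$ on the compact interval $[p-\eta_{0},q+\eta_{0}]$ (again by lower semicontinuity) together with the finite-area hypothesis produces an integrable majorant, and monotone convergence gives
\[
\int_{p-\eta}^{q+\eta}\frac{f(x)}{h_{\eta}(x)^{2}}\,dx\;\longrightarrow\;\int_{p}^{q}\frac{dx}{f(x)},
\]
closing the chain of inequalities. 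The principal obstacle is verifying the lower bound $m$ on the terminal $y$-coordinate of $\gamma$ when $f$ has genuine jumps and the ``endpoint'' of $\gamma$ is only a prime end rather than an honest boundary point; this is exactly where Lemma~\ref{lem:prime_ends} intervenes, confining each prime end's imprint to a single vertical line at height bounded below by $f$ at that line, so the admissibility estimate above remains valid throughout the generality of the theorem.
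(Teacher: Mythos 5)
Your reduction to the single inequality $\lim_{\eta\to 0^+}\m(\G_{<\eta})\leq\int_p^q dx/f(x)$ via Lemma \ref{lem:general} is exactly the paper's reduction, but from there you take a genuinely different route. The paper never attacks the lower semicontinuous case directly: it approximates $f$ from below by an increasing sequence of continuous functions $f_n$, observes that $\G_{<\eta}$ overflows the corresponding family $\G_{<\eta,n}$ in the smaller domain $D_n$ (so $\m(\G_{<\eta})\leq\m(\G_{<\eta,n})$), applies the already-established continuous case (Theorem \ref{thm:continuous}, proved by a Riemann-sum/overflowing-into-rectangles argument) to bound this by $\int_p^q dx/f_n(x)$, and finishes with dominated convergence in $n$ after checking $\min_{[p,q]}f>0$. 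You instead exhibit the explicit admissible metric $\rho_\eta=\chi_D\chi_{[p-\eta,q+\eta]}/h_\eta$ with $h_\eta(x)=\inf_{|x'-x|\leq\eta}f(x')$ and pass to the limit in $\eta$ by monotone/dominated convergence, using $h_\eta\nearrow f$ (which is precisely lower semicontinuity) and the majorant $f/c^2$ coming from $\min f>0$ on a slightly larger compact interval plus finite area. Your argument is more self-contained -- it handles the continuous and lower semicontinuous cases in one stroke and sidesteps your own correct objection that $1/f$ need not be Riemann integrable -- at the price of having to verify admissibility curve by curve. The one step you should expand is the vertical-variation bound: Lemma \ref{lem:prime_ends} only places the imprint on a vertical line $\{x=x_1\}$, and you additionally need that for a prime end of the arc $(c,d)$ the imprint lies at height $\geq f(x_1)$. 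This does hold -- the imprint is a subset of $\partial D$, and $\partial D\cap\{x=x_1\}$ above the real axis consists only of points with $y\geq f(x_1)$ because every $(x_1,y)$ with $0<y<f(x_1)$ lies in the open set $D$; one must also rule out the imprint being the bottom point $(x_1,0)$, which follows since the imprint is a continuum and $(c,d)$ lies on the top side -- but as written it is asserted rather than proved, and it is the only place where your proof could be said to have a gap. The remaining technical points (the curve's projection landing in $[p-\eta,q+\eta]$, measurability of $h_\eta$, the shrinking domain of integration, and curves escaping to infinity in an unbounded finite-area $D$, which then have infinite $\rho_\eta$-length) all check out or are easily patched.
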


\begin{proof}

%Therefore $a\neq \pi_1(d)$ and $b\neq\pi_1(c)$ and consequently
%$-\infty<p<q<\infty$.

%Note that $a$ and $\pi_1(d)$ cannot simultaneously be $-\infty$ indeed.
Just like in the proof of Theorem \ref{thm:continuous} it is enough to show
that the inequality (\ref{continuous-is-nice}) holds even if $f$ is lower
semicontinuous.

We will use the well known fact that if $f:(A,B)\to[0,\infty]$ is a lower
semicontinuous function then there is a sequence of continuous functions
$f_n:(A,B)\to[0,\infty)$  such that $f_n(x)\leq f_{n+1}(x), n=1,2,\ldots$ and
$f_n(x)\to f(x)$ for every $x\in(A,B)$. Next, let
$$D_{n} = \{(x,y) :
x\in(A,B), y\in(0,f_n(x)) \}$$ and let $\G_{<\eta, n}$ be the collection of
curves $\g$ in $D_{n}$ such that $\g(0) \in (p,q)$, $\g(1)$ belongs to the graph of $f_n$ and
$|\pi_1(\g)|<\eta$. In other words, $\g\in\G_{<\eta, n}$ connects the
interval $(p,q)$ to the graph of $f_n$ and has ``horizontal variation"
$<\eta$.

%We first assume $A<p<q<B$ and fix $\delta>0$ so that
%$[p-\delta,q+\delta]\subset (A,B)$. Let $D_{n,\delta} = \{(x,y) :
%x\in(p-\delta,q+\delta), y\in(0,f_n(x)) \}$ and let $\G_{<\eta, n,\delta}$ be
%the collection of curves $\g$ in $D_{n,\delta}$ such that $\g(0) \in (p,q)$,
%$\g(1)=(x,f_n(x))$ for some $x\in(p-\delta,q+\delta)$ and $|\pi_1(\g)|<\eta$.
%In other words, $\g\in\G_{<\eta, n,\delta}$ connects the interval $(p,q)$ to
%the graph of $f_n$ over a little larger interval and has ``horizontal
%variation" $<\eta$.

Note that for every $\eta>0$ and every $n\in\mathbb{N}$ the family
$\G_{<\eta}$ overflows $\G_{<\eta,n}$. Indeed, since $f_n(x)<f(x)$ a curve
$\g\in\G_{<\eta}$ would have to ``hit" the graph of the continuous $f_n$
before ``reaching" the graph of $f$. Therefore $\m(\G_{<\eta}) \leq
\m(\G_{<\eta,n})$ and since $f_n$ is continuous Theorem \ref{thm:continuous}
yields
\begin{equation}
\limsup_{\eta\to0^{+}} \m(\G_{<\eta}) \leq \limsup_{\eta\to 0^{+}} \m(\G_{<\eta,n})
\leq \int_{p}^{q}\frac{dx}{f_n(x)}.
\end{equation}
Now, by our assumption $a,b,c$ and $d$ are different prime ends and therefore
$\m(\G_{(a,b,c,d)})<\infty$. Next we note that this implies that
$\min_{x\in[p,q]} f(x)>0$. Indeed, since $f$ is lower semicontinuous it
attain a minimum in $[p,q]$, we will denote this minimum by $m$. Now, since
$D$ is a domain we have that if $m=0$ then it is attained at one (or both) of
the endpoints of $[p,q]$. This would only be possible if (say) $a=d$, which
is a contradiction.

Now, since there is a constant $c>0$ such that $f(x)>c$ for every $x\in[p,q]$
then it is easy to see that we can assume that $f_n(x)\geq c>0$  for
$x\in(p,q)$ (just redefine the $f_n$ to be the maximum of $c$ and the old
$f_n$). Hence by the dominated convergence theorem we have $\int_p^q
(f_n(x))^{-1}dx\to \int_p^q (f_n(x))^{-1}dx$, which gives
(\ref{continuous-is-nice}) in the case of a semi-continuous $f$.
\end{proof}

The above theorem immediately gives Theorem \ref{thm:arbitrary} from
Introduction. Theorem \ref{thm:teichmuller} is a direct consequence of
Theorem 1 and Lemma \ref{lem:mod_liouville_measure}.

%The theorem easily imples the following result.
%
%\begin{lemma}
%Let $D \subset\mathbb{C}$ be a finite area domain. Let $a,b,c,d$ be prime
%ends of $D$ given in a cyclic order. Given an open interval
%$I\subset\mathbb{R}$ compactly contained in $D$ let $\G$ be the family of
%curves in $D$ which connect $(a,b)$ to (c,d) and intersect $I$, and let. Then
%\begin{equation}
%  \limsup_{\eps\to0}\eps \m(\G^{\eps}) = \int_V\frac{dx}{f(x)-g(x)} =\m(\G^v),
%\end{equation}
%where $V$ is the collection of $x$'s for which there is a vertical curve
%$\g\in \G$ passing  through $(x,0)$..
%\begin{eqnarray}
%f(x)&=&\sup \{y: [x,x+iy)\subset D\},\\
%g(x)&=&\inf \{y: (x+iy,x]\subset D\}.
%\end{eqnarray}
%\end{lemma}
%\begin{proof}
%  Just like before it is enough to show that
%\end{proof}


\begin{thebibliography}{5}

\vskip .5cm

\bibitem{Ahlfors:Confinvariants} Ahlfors, Lars V.,\textit{Conformal
    invariants:
    topics in geometric function theory.}
    McGraw-Hill Series in Higher Mathematics. McGraw-Hill Book Co.,
    New York-Düsseldorf-Johannesburg, 1973. ix+157 pp.

\bibitem{Ahlfors:QClectures} Lars V. Ahlfors, \emph{Lectures on
    quasiconformal mappings.} University Lecture Series, \textbf{38}.
    American Mathematical Society, Providence, RI, 2006.


%\bibitem{Baire} R. Baire, \emph{ }.

\bibitem{Bon1} F. Bonahon, \emph{The geometry of Teichm\"uller space via
    geodesic currents}, Invent. Math. 92 (1988), no. 1, 139Ð162.

%\bibitem{CB} C. B. \emph{ }

%\bibitem{Keith} S. Keith, \emph{Modulus and the Poincar´e inequality on metric measure spaces}

\bibitem{FLP} A. Fathi, F. Laudenbach and V. Po\' enaru, \emph{Thurston's
    work on surfaces}, Translated from the 1979 French original by Djun M.
    Kim and Dan Margalit. Mathematical Notes, 48. Princeton University Press,
    Princeton, NJ, 2012.

\bibitem{LV}  O. Lehto and K. I. Virtanen, \emph{Quasiconformal mappings in
    the plane.}, Second edition. Translated from the German by K. W. Lucas.
    Die Grundlehren der mathematischen Wissenschaften, Band 126.
    Springer-Verlag, New York-Heidelberg, 1973.

\bibitem{Len} A. Lenzhen, \emph{Teichm\"uller geodesics that do not have a
    limit in PMF}, Geom. Topol. 12 (2008), no. 1, 177-197.

\bibitem{Mas} H. Masur, \emph{Two boundaries of Teichm\"uller space}, Duke
    Math. J. 49 (1982), no. 1, 183-190.

\bibitem{Ot} J. P. Otal, \emph{About the embedding of Teichm\" uller space
    in the space of geodesic H\" older distributions. Handbook of Teichm\"
    uller theory}, Vol. I, 223-248, IRMA Lect. Math. Theor. Phys., 11, Eur.
    Math. Soc., ZŸrich, 2007.

\bibitem{Pomm} Ch. Pommerenke, \emph{Boundary behaviour of conformal maps},
    Grundlehren der Mathematischen Wissenschaften [Fundamental Principles of
    Mathematical Sciences], 299. Springer-Verlag, Berlin, 1992.

\bibitem{Sa1} D. \v Sari\' c, \emph{Real and Complex Earthquakes}, Trans.
    Amer. Math. Soc. 358 (2006), no. 1, 233-249.


\bibitem{Sa2}  D. \v Sari\' c, \emph{Geodesic currents and Teichm\"uller
    space}, Topology 44 (2005), no. 1, 99-130.

\bibitem{Sa4} D. \v Sari\' c, \emph{Infinitesimal Liouville distributions
    for Teichm\"uller space}, Proc. London Math. Soc. (3) 88 (2004), no. 2,
    436-454.

\bibitem{Sa3} D. \v Sari\' c, \emph{Thurston's boundary for Teichm\"uller
    spaces of infinite surfaces: the geodesic currents and the length
    spectrum}, preprint, available at Arxiv.

\bibitem{Thurston} W.P.Thurston \emph{On the geometry and dynamics of the
    diffeomorphisms of surfaces}. Bulletin of the AMS, Volume 19, \# 2,
    October 1988.

\bibitem{Vaisala:lectures} J. $\rm{V\ddot{a}is\ddot{a}l\ddot{a}}$,
    \emph{Lectures on n-dimensional quasiconformal mappings.} Lecture Notes
    in Mathematics, \textbf{229}. Springer-Verlag, Berlin-New York, 1971.


\end{thebibliography}
\end{document}